\newtheorem{theorem}{Theorem}[section]
\newtheorem{lemma}[theorem]{Lemma}
\theoremstyle{definition}
\newtheorem{definition}[theorem]{Definition}
\newtheorem{Open Prob}[theorem]{Open Problem}
\theoremstyle{remark}
\numberwithin{equation}{section}
\def\DJ{\leavevmode\setbox0=\hbox{D}\kern0pt\rlap{\kern.04em\raise.188\ht0\hbox{-}}D}
\begin{document}

\title[$F$-Proximal contraction]{A note on the paper "Best proximity point of generalized $F$-proximal non-self contractions}

\author[S.\ Som]
{Sumit Som}

\address{           Sumit Som,
                    Department of Mathematics,
                    School of Basic and Applied Sciences, Adamas University, Barasat-700126, India.}
                    \email{somkakdwip@gmail.com}

\subjclass {$47H10$, $54H25$}

\keywords{Best proximity point, $F$-proximal  contraction, $F$-contraction of Hardy-Rogers-type, fixed point.}

\begin{abstract}
In the year 2021, Beg et al. \cite{beg} [J. Fixed Point Theory Appl.(2021)] introduced two classes of non-self mappings namely, generalized $F$-proximal contraction of the first kind and generalized $F$-proximal contraction of the second kind. Then authors studied the existence and uniqueness of best proximity points for this two classes of mappings. In this short note, we show that the existence of best proximity point for generalized $F$-proximal contraction of the first kind follows from the same conclusion in fixed point theory.
\end{abstract}

\maketitle

\section{\bf{Introduction}}

Let $(X,d)$ be a metric space and $f:M\rightarrow N$ be a mapping where $M,N$ are non-empty subsets of the metric space $X.$ If $f(M)\cap M \neq \phi$ then one search for a necessary and sufficient condition under which the mapping $f$ has a fixed point. One of the fundamental results in metric fixed point theory is the Banach contraction principle. In the year 1922, Banach \cite{BA} proved that if $X$ is a complete metric space and $f:X\rightarrow X$ is a contraction mapping then the mapping $f$ has a unique fixed point. Banach contraction principle has a lot of applications in differential equations, integral equations for the existence of solutions. But for a mapping $f:M\rightarrow N$ if $f(M)\cap M =\phi$ then the mapping $f$ has no fixed points. In this case, one interesting problem is to search for an element $x\in M$ such that $d(x,f(x))= d(M,N)$ where $d(M,N)=\inf~\{d(x,y):x\in M, y\in N\}.$ Best proximity point problems deal with this situation. Authors often prove best proximity point results to generalize fixed point results for self mappings. To generalize the notion of Banach contraction principle, In the year 2012, Wardowski \cite{war} introduced the notion of $F$-contraction mapping and investigated about unique fixed point of such class of mappings in a complete metric space. Then in the year 2014, Cosentino and Vetro \cite{cosen} introduced the notion of $F$-contraction of Hardy-Rogers-type as a generalization of $F$-contraction mapping and showed the existence of fixed points for such class of mappings in a complete metric space. Then in the year 2021, Beg et al. \cite{beg} introduced the notion of generalized $F$-proximal contraction of the first kind and generalized $F$-proximal contraction of the second kind respectively for non-self mappings and investigated about unique best proximity point for these two classes of non-self mappings to generalize the fixed point result of Cosentino and Vetro \cite{cosen}. In this short note, we show that the existence of best proximity point for generalized $F$-proximal contraction of the first kind follows from the same conclusion in fixed point theory.

\section{\bf{Main results}}
Throughout this article $\mathbb{R}^{+}$ denotes the set of all positive real numbers and $\mathbb{R}$ denotes the set of all real numbers. Let $f:\mathbb{R}^{+}\rightarrow \mathbb{R}$ be a mapping satisfying the following conditions:

i) $f$ is strictly increasing;\\
ii) for every sequence $\{\alpha_n\}\subset \mathbb{R}^{+}$ we have $\displaystyle{\lim_{n\rightarrow \infty}}\alpha_n=0\Longleftrightarrow \displaystyle{\lim_{n\rightarrow \infty}}f(\alpha_n)=-\infty;$\\
iii)there exists $k\in (0,1)$ such that $\displaystyle{\lim_{\alpha\rightarrow 0+}}\alpha^{k}f(\alpha)=0.$

In this paper the set of all functions $f:\mathbb{R}^{+}\rightarrow \mathbb{R}$ satisfying the three conditions above will be denoted by $\Omega.$

We first recall the definition of $F$-contraction mapping from \cite{war} as follows.

\begin{definition}\cite{war}
Let $(X,d)$ be a metric space and $F\in \Omega.$ A mapping $T:X\rightarrow X$ is said to be an $F$-contraction mapping if there exists $\tau>0$ such that
$$\forall x,y\in X, d(Tx,Ty)>0\Rightarrow \tau+F(d(Tx,Ty))\leq F(d(x,y)).$$ 
\end{definition}

Now we recall the definition of $F$-contraction of Hardy-Rogers-type from \cite{cosen} as follows.

\begin{definition}\cite{cosen} \label{a1}
Let $(X,d)$ be a metric space. A mapping $T:X\rightarrow X$ is said to be an $F$-contraction of Hardy-Rogers-type if there exists $F\in \Omega$, $a,b,c,e,\tau>0,$ $L\geq 0$ with $a+b+c+2e=1, c\neq 1$ such that 
$$\forall x,y\in X, d(Tx,Ty)>0\Rightarrow \tau+F(d(Tx,Ty))\leq $$
$$F\Big(ad(x,y)+bd(x,Tx)+cd(y,Ty)+ed(x,Ty)+Ld(y,Tx)\Big).$$ In this paper, we will say $T$ is an $F$-contraction mapping with coefficients $a,b,c,e,\tau,L.$
\end{definition}
By using this definition \ref{a1}, Cosentino and Vetro \cite{cosen} proved the following fixed point theorem.

\begin{theorem}\cite{cosen} \label{a2}
Let $(X,d)$ be a complete metric space and $T:X\rightarrow X$ be a self mapping such that $T$ is an $F$-contraction mapping with coefficients $a,b,c,e,\tau,L.$ Then $T$ has a fixed point. Moreover, if $a+e+L\leq 1$ then $T$ has a unique fixed point.
\end{theorem}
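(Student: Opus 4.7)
The proof will follow the standard Wardowski--Cosentino--Vetro scheme via Picard iteration, using properties (i)--(iii) of $F$ to upgrade the contraction inequality into summability of consecutive distances.

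The plan is to fix $x_0\in X$ arbitrarily, form $x_{n+1}=Tx_n$, and write $d_n=d(x_n,x_{n+1})$. If $d_n=0$ for some $n$ we are done, so assume $d_n>0$ for all $n$. Applying the Hardy--Rogers $F$-contraction inequality to the pair $(x_{n-1},x_n)$ and using $d(x_{n-1},x_{n+1})\le d_{n-1}+d_n$, the argument of $F$ on the right is bounded by $(a+b+e)d_{n-1}+(c+e)d_n$. The first key step is to rule out $d_n\ge d_{n-1}$: in that case the right-hand bound is at most $(a+b+c+2e)d_n=d_n$, giving $\tau+F(d_n)\le F(d_n)$, which contradicts $\tau>0$ and strict monotonicity. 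Hence $d_n<d_{n-1}$, and the same bound then yields $F(d_n)\le F(d_{n-1})-\tau$, so inductively $F(d_n)\le F(d_0)-n\tau$.

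Next I would push this through property (ii) to conclude $d_n\to 0$, and then use property (iii): choose $k\in(0,1)$ with $\alpha^kF(\alpha)\to 0$ as $\alpha\to 0^+$. Multiplying the inequality $n\tau\le F(d_0)-F(d_n)$ by $d_n^k$ gives $n\tau d_n^k\le d_n^kF(d_0)-d_n^kF(d_n)\to 0$, so $nd_n^k\to 0$. This forces $d_n=O(n^{-1/k})$ with $1/k>1$, hence $\sum d_n<\infty$, so $\{x_n\}$ is Cauchy. By completeness, $x_n\to x^*$ for some $x^*\in X$.

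To identify $x^*$ as a fixed point, I would argue by contradiction assuming $Tx^*\ne x^*$. If $Tx_n=Tx^*$ for infinitely many $n$, then $x_{n+1}=Tx^*$ along a subsequence, forcing $Tx^*=x^*$. Otherwise $d(Tx_n,Tx^*)>0$ eventually, and the $F$-contraction gives
\[
d(Tx_n,Tx^*)<ad(x_n,x^*)+bd(x_n,Tx_n)+cd(x^*,Tx^*)+ed(x_n,Tx^*)+Ld(x^*,Tx_n)
\]
(using strict monotonicity of $F$ to drop $\tau+F$ from both sides). Letting $n\to\infty$, the right-hand side tends to $(c+e)d(x^*,Tx^*)$; since $a+b+c+2e=1$ with $c\ne 1$ forces $a+b+e>0$ and thus $c+e<1$, this yields $d(x^*,Tx^*)\le(c+e)d(x^*,Tx^*)$, a contradiction. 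Finally, under $a+e+L\le 1$, uniqueness is immediate: for two fixed points $x^*,y^*$ the right-hand side of the contraction inequality collapses to $F((a+e+L)d(x^*,y^*))\le F(d(x^*,y^*))$, contradicting $\tau>0$.

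The main obstacle is the summability step, where the otherwise purely order-theoretic information on $F$ must be converted into a quantitative rate via property (iii); the limiting argument at $x^*$ is more delicate than in the self-contained Wardowski case because $F$ need not be continuous, but the strict monotonicity trick of stripping $\tau+F$ handles this cleanly.
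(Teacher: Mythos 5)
The paper states this theorem without proof, importing it directly from Cosentino and Vetro, and your argument is a correct reconstruction of the standard proof given in that reference: the Picard iteration, the descent $F(d_n)\le F(d_0)-n\tau$, the summability of $\sum d_n$ via condition (iii), the strict-monotonicity trick of stripping $\tau+F$ to identify the limit as a fixed point, and the collapse to $F((a+e+L)d(x^*,y^*))$ for uniqueness all match the original. The only cosmetic slip is your claim that $c\neq 1$ is what forces $a+b+e>0$; in the definition used here $a,b,e$ are already assumed strictly positive, so $c+e<1$ is automatic.
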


Now, we recall the definition of generalized $F$-proximal contraction of the first kind from \cite{beg} as follows.

\begin{definition}\cite{war}\label{a3}
Let $(A, B)$ be a pair of non-empty subsets of a metric space $(X,d).$ A non-self mapping $f:A\rightarrow B$ is said to be a generalized $F$-proximal contraction of the first kind if there exists $F\in \Omega$, $a,b,c,h,\tau>0$ with $a+b+c+2h=1, c\neq 1$ such that $d(u_1, f(x_1))= d(A, B)$ and $d(u_2, f(x_2))= d(A, B)$ will imply
$$\tau+F(d(u_1,u_2))\leq $$
$$F\Big(ad(x_1,x_2)+bd(u_1,x_1)+cd(u_2,x_2)+h(d(u_2,x_1)+d(u_1,x_2))\Big)$$
for all $u_1, u_2, x_1, x_2 \in A$ and $u_1\neq u_2.$ 
\end{definition}

The following notations will be needed.
Let $(X,d)$ be a metric space and $A,B$ be nonempty subsets of $X.$ Then
$$A_0=\{x\in A: d(x,y)=d(A,B)~\mbox{for some}~y\in B\}.$$
$$B_0=\{y\in B: d(x,y)=d(A,B)~\mbox{for some}~x\in A\}.$$

\begin{definition}\cite{beg}
Let $(X,d)$ be a metric space and $A,B$ be two non-empty subsets of $X.$ Then $B$ is said to be approximatively compact with respect to $A$ if for every sequence $\{y_n\}$ of $B$ satisfying $d(x,y_n)\rightarrow d(x,B)$ as $n\rightarrow \infty$ for some $x\in A$ has a convergent subsequence where $d(x,B)=\inf~\{d(x,y): y\in B\}.$
\end{definition}

\begin{definition}\cite{beg}
Let $(X,d)$ be a metric space and $A,B$ be two non-empty subsets of $X$ with $A_0\neq \phi.$ If for every $u_1,u_2\in A$ and $v_1,v_2\in B$ \[
\begin{rcases}
 d(u_1, v_1)= d(A, B)\\
 d(u_2, v_2)= d(A, B)
 \end{rcases}
  {\Longrightarrow d(u_1,u_2)=d(v_1,v_2)}
  \] 
then the pair $(A,B)$ is said to have the $p$-property.
\end{definition}

We need the following result from \cite{FE}.
\begin{lemma}\cite[\, proposition 3.3]{FE}\label{b}
Let $(A,B)$ be a nonempty and closed pair of subsets of a metric space $(X,d)$ such that $B$ is approximatively compact with respect to $A.$ Then $A_0$ is closed.
\end{lemma}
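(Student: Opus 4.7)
The plan is to establish that $A_0$ is closed by a direct sequential argument. I take an arbitrary sequence $\{x_n\}\subset A_0$ with $x_n\to x$ in $X$ and aim to show $x\in A_0$. Since $A$ is closed and $x_n\in A_0\subseteq A$, immediately $x\in A$. By the definition of $A_0$, for each $n$ there exists $y_n\in B$ satisfying $d(x_n,y_n)=d(A,B)$, and the idea is to promote the auxiliary sequence $\{y_n\}$ to a realising point for $x$ by invoking approximative compactness of $B$ with respect to $A$.

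The key technical step is to verify the hypothesis of approximative compactness for $\{y_n\}$, namely that $d(x,y_n)\to d(x,B)$. For this I combine the trivial bounds $d(A,B)\leq d(x,B)\leq d(x,y_n)$ (the first because $x\in A$, the second because $y_n\in B$) with the triangle inequality
$$d(x,y_n)\leq d(x,x_n)+d(x_n,y_n)=d(x,x_n)+d(A,B).$$
Letting $n\to\infty$, the right-hand side tends to $d(A,B)$, so the squeeze simultaneously forces $d(x,y_n)\to d(A,B)$ and $d(x,B)=d(A,B)$. In particular $d(x,y_n)\to d(x,B)$, which is precisely the hypothesis required to trigger approximative compactness.

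Applying approximative compactness of $B$ with respect to $A$, the sequence $\{y_n\}$ admits a convergent subsequence $y_{n_k}\to y$ in $X$. Since $B$ is closed, $y\in B$, and continuity of the metric yields $d(x,y)=\lim_k d(x,y_{n_k})=d(A,B)$, so $x\in A_0$.

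The main (and essentially the only) obstacle is the squeeze step: one must arrange the triangle inequality so that $d(x,y_n)$ is trapped between $d(x,B)$ and $d(x,x_n)+d(A,B)$, and simultaneously identify $d(x,B)$ with $d(A,B)$, so that approximative compactness can be legitimately invoked. Once this is in place, closedness of $B$ and the definition of $A_0$ finish the argument immediately.
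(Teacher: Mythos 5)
Your proof is correct and complete: the paper itself states this lemma without proof (citing Fernandez-Leon, Proposition 3.3), and your squeeze argument $d(x,B)\leq d(x,y_n)\leq d(x,x_n)+d(A,B)$ followed by approximative compactness and closedness of $B$ is precisely the standard argument used in that reference. No gaps.
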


By using definition \ref{a3}, Beg et al. proved the following best proximity point result in \cite{beg}

\begin{theorem}\cite{beg}\label{a4}
Let $A,B$ be non-void, closed subsets of a complete metric space $(X,d)$ such that $B$ is approximatively compact with respect to $A.$ Let $T:A\rightarrow B$ satisfies the following conditions:\\
i) $T(A_0)\subseteq B_0$ and $(A,B)$ satisfies the $p$-property;\\
ii) $T$ is a generalized $F$-proximal contraction of the first kind.\\
Then there exists a unique element $x\in A$ such that $d(x,T(x))=d(A,B).$ Further, for any $x_0\in A_0,$ the sequence $\{x_n\}$ defined by $d(x_{n+1},T(x_n))=d(A,B),$ converges to the best proximity point $x.$
\end{theorem}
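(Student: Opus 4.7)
The plan is to deduce Theorem~\ref{a4} from Theorem~\ref{a2} by constructing an auxiliary self-map on $A_0$ and verifying that it is an $F$-contraction of Hardy-Rogers-type in the sense of Definition~\ref{a1}.

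First I would observe that $A_0$ is a closed subset of the complete metric space $X$: the pair $(A,B)$ is closed and $B$ is approximatively compact with respect to $A$, so Lemma~\ref{b} applies. Consequently $(A_0,d)$ is itself a complete metric space. Next I would build a self-map $g:A_0\to A_0$ as follows. For each $x\in A_0$, the hypothesis $T(A_0)\subseteq B_0$ gives at least one $u\in A_0$ with $d(u,Tx)=d(A,B)$; the $p$-property forces this $u$ to be unique, because if $u_1,u_2\in A_0$ both satisfy $d(u_i,Tx)=d(A,B)$ then $d(u_1,u_2)=d(Tx,Tx)=0$. Set $g(x):=u$. A key side observation is that the sequence $\{x_n\}$ defined by $d(x_{n+1},Tx_n)=d(A,B)$ is then literally the Picard iteration $x_{n+1}=g(x_n)$ starting at $x_0$.

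Third, I would check that $g$ satisfies Definition~\ref{a1} on the complete metric space $A_0$. Fix $x_1,x_2\in A_0$ with $g(x_1)\ne g(x_2)$ and put $u_i:=g(x_i)$, so $d(u_i,Tx_i)=d(A,B)$. Applying Definition~\ref{a3} directly yields
$$\tau+F\bigl(d(g(x_1),g(x_2))\bigr)\le F\Bigl(a\,d(x_1,x_2)+b\,d(x_1,g(x_1))+c\,d(x_2,g(x_2))+h\,d(x_1,g(x_2))+h\,d(x_2,g(x_1))\Bigr).$$
This is precisely the Hardy-Rogers inequality of Definition~\ref{a1} for $g$ with coefficients $a,b,c$, and with $e:=h$ and $L:=h$; the required constraints $a+b+c+2e=1$, $c\ne 1$, and $L\ge 0$ transfer verbatim from Definition~\ref{a3}.

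Finally, Theorem~\ref{a2} applied to $g$ on $A_0$ produces a fixed point $x^*\in A_0$, and $g(x^*)=x^*$ is exactly $d(x^*,Tx^*)=d(A,B)$. For uniqueness, note that since $a,b,c,h>0$ and $a+b+c+2h=1$, we have $a+e+L=a+2h<1$, so the uniqueness clause of Theorem~\ref{a2} applies and gives a unique best proximity point. The convergence statement for $\{x_n\}$ follows immediately, since $\{x_n\}$ is the Picard orbit of $g$ starting at $x_0\in A_0$, and Theorem~\ref{a2} supplies the convergence of such orbits. I do not anticipate a serious obstacle: the entire content of the argument is the bookkeeping identification $(a,b,c,h)\mapsto(a,b,c,e,L)=(a,b,c,h,h)$, together with the standard but important point that the $p$-property turns the best-proximity iteration into a genuine Picard scheme for a well-defined self-map on the closed set $A_0$.
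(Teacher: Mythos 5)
Your proposal is correct and follows essentially the same route as the paper: invoke Lemma~\ref{b} to get completeness of $A_0$, use $T(A_0)\subseteq B_0$ and the $p$-property to define the induced self-map on $A_0$, identify it as an $F$-contraction of Hardy-Rogers-type with $e=L=h$, and apply Theorem~\ref{a2}. Your one genuine addition is the observation that $a+e+L=a+2h=1-b-c<1$, which lets you get uniqueness directly from Theorem~\ref{a2} rather than deferring to the argument of Beg et al.\ as the paper does.
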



Now we like to state our main result.
\begin{theorem}\label{a5}
Theorem \ref{a4} follows from Theorem \ref{a2} and the Picard sequence will work to find the unique best proximity point $x\in A$ satisfying $d(x, T(x))=d(A,B).$
\end{theorem}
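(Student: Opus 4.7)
The plan is to reduce the best proximity point problem for $T$ on the pair $(A,B)$ to a fixed point problem for an associated self-map $g$ on the set $A_0$, and then apply Theorem \ref{a2} directly. I expect the proof to be short once the right self-map is defined; the main bookkeeping is matching the coefficients of Definition \ref{a3} with those of Definition \ref{a1}.

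First, I would note that since $A$ is closed in the complete metric space $X$ and $B$ is approximatively compact with respect to $A$, Lemma \ref{b} yields that $A_0$ is closed, hence $(A_0,d)$ is itself a complete metric space. Next, I would define $g \colon A_0 \to A_0$ as follows. For each $x \in A_0$, the point $T(x)$ lies in $T(A_0) \subseteq B_0$, so there exists at least one $u \in A$ with $d(u, T(x)) = d(A,B)$; such a $u$ automatically lies in $A_0$. Uniqueness of $u$ follows from the $p$-property: if $d(u_1, T(x)) = d(u_2, T(x)) = d(A,B)$, then $d(u_1, u_2) = d(T(x), T(x)) = 0$. Hence $g(x) := u$ is well defined as a self-map of $A_0$.

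The next step is to verify that $g$ is an $F$-contraction of Hardy-Rogers-type in the sense of Definition \ref{a1}, with coefficients $(a, b, c, e, \tau, L) = (a, b, c, h, \tau, h)$ inherited from the $F$-proximal constants of $T$. For $x_1, x_2 \in A_0$ with $d(g(x_1), g(x_2)) > 0$, applying Definition \ref{a3} to $u_1 = g(x_1), u_2 = g(x_2)$ (which satisfy $d(u_i, T(x_i)) = d(A,B)$) gives exactly
\begin{align*}
\tau + F(d(g(x_1), g(x_2))) \leq F\bigl(&a\, d(x_1,x_2) + b\, d(g(x_1),x_1) + c\, d(g(x_2),x_2) \\
&{}+ h\, d(g(x_2),x_1) + h\, d(g(x_1),x_2)\bigr),
\end{align*}
which matches the Hardy-Rogers form after identifying $e = h$ and $L = h$; the constraint $a + b + c + 2e = 1$ with $c \neq 1$ and $L \geq 0$ is inherited directly from Definition \ref{a3}.

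Finally, applying Theorem \ref{a2} to $g$ on the complete space $A_0$ yields a fixed point $x \in A_0$; by construction $d(x, T(x)) = d(A,B)$, so $x$ is a best proximity point of $T$. For uniqueness, I would check the hypothesis $a + e + L \leq 1$ of the moreover clause of Theorem \ref{a2}: with $e = L = h$, $a + e + L = a + 2h = 1 - b - c < 1$, since $b, c > 0$. Uniqueness is therefore automatic. The Picard iteration $x_{n+1} = g(x_n)$ converging to $x$ is precisely the sequence $\{x_n\}$ of Theorem \ref{a4} defined by $d(x_{n+1}, T(x_n)) = d(A,B)$, so the convergence claim transfers as well.

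The only place I anticipate a subtlety is in the identification of coefficients: Definition \ref{a3} uses a single parameter $h$ multiplying the symmetric cross term $d(u_2,x_1) + d(u_1,x_2)$, while Definition \ref{a1} distinguishes $e$ (multiplying $d(x,Ty)$) and $L$ (multiplying $d(y,Tx)$). The observation that setting $e = L = h$ preserves both the normalization $a+b+c+2e = 1$ and the uniqueness bound $a + e + L < 1$ is the key, and once this is in place the reduction is essentially automatic.
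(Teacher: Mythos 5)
Your proposal is correct and follows essentially the same route as the paper: define the self-map on $A_0$ via the $p$-property, use Lemma \ref{b} for completeness of $A_0$, verify the Hardy--Rogers condition with $e=L=h$, and invoke Theorem \ref{a2}. Your treatment of uniqueness is in fact slightly more self-contained than the paper's, since you derive it from the ``moreover'' clause of Theorem \ref{a2} by checking $a+e+L=a+2h=1-b-c<1$, whereas the paper omits this step and defers to the argument of Beg et al.
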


\begin{proof}
Let $x\in A_0.$ As $T(A_0)\subseteq B_0,$ so, $T(x)\in B_0.$ So, there exists $y\in A_0$ such that $d(y,T(x))=d(A,B).$ Now, we will show that $y\in A_0$ is unique. Suppose there exists $y_1,y_2\in A_0$ such that $d(y_1,T(x))=d(A,B)$ and $d(y_2,T(x))=d(A,B).$ Now since the pair $(A,B)$ have the $p$-property so, we have $$d(y_1,y_2)=d(Tx,Tx)$$
$$\Rightarrow d(y_1,y_2)=0$$
$$\Rightarrow y_1=y_2.$$
Now define a mapping $S:A_0\rightarrow A_0$ by $Sx=y$ having the property that $d(Sx,Tx)=d(A,B).$ Now we show that $S$ is a $F$-contraction of Hardy-Rogers-type. Let $x_1,x_2\in A_0$ with $S(x_1)\neq S(x_2).$ Now since $d(S(x_1),T(x_1))=d(A,B)$ and $d(S(x_2),T(x_2))=d(A,B)$ and $T$ is a generalized $F$-proximal contraction of the first kind, so, we have,
$$\tau+F(d(S(x_1),S(x_2)))\leq $$
$$F\Big(ad(x_1,x_2)+bd(S(x_1),x_1)+cd(S(x_2),x_2)+h(d(S(x_2),x_1)+d(S(x_1),x_2))\Big)$$
$$\Longrightarrow \tau+F(d(S(x_1),S(x_2)))\leq $$
$$F\Big(ad(x_1,x_2)+bd(S(x_1),x_1)+cd(S(x_2),x_2)+hd(S(x_2),x_1)+hd(S(x_1),x_2)\Big)$$ where $a+b+c+2h=1, c\neq 1$ and $h>0.$ This shows that the mapping $S:A_0\rightarrow A_0$ is a $F$-contraction of Hardy-Rogers-type. Also, from lemma \ref{b}, we can conclude that $A_0$ is closed and hence is a complete metric space. So, from Theorem \ref{a2}, we can say, the mapping $S$ has a fixed point in $A_0.$ So, there exists $z\in A_0$ such that $S(z)=z.$ Also, $d(z,T(z))=d(Sz,Tz)=d(A,B).$ This shows that $z$ is a best proximity point for the mapping $T:A\rightarrow B$ and the picard sequence $\{S^{n}(x_0)\}$ will converge to the best proximity point for the mapping $T:A\rightarrow B$ for any initial point $x_0\in A_0$ as follows from the proof of Theorem \ref{a2} in \cite{cosen} by Cosentino and Vetro. The proof for the uniqueness of the best proximity point for the generalized $F$-proximal contraction of the first kind mapping $T$ is same as Beg et al. already proved in \cite{beg}, so omitted.
\end{proof}

\section{conclusion}
The main motivation of the current paper is to show that the best proximity point result proved by Beg et al. in \cite{beg} follows from the fixed point result of Cosentino and Vetro in \cite{cosen}. Also, we show that the picard sequence will also work along with the algorithm proposed by Beg et al. in \cite{beg} to find the unique best proximity point of the generalized $F$-proximal contraction of the first kind mapping $T:A\rightarrow B.$


\begin{thebibliography}{10}

\bibitem{BA}
S. Banach,
\newblock Sur les operations dans les ensembles abstraits et leur applications aux equations integrals
\newblock {\em Fund. Math.}, (3) 133-181 (1922).

\bibitem{beg}
I.Beg, G.Mani and A.J.Gnanaprakasam
\newblock Best proximity point of generalized F-proximal non-self contractions
\newblock {\em J. Fixed Point Theory Appl.}, 23 Article 49 doi:https://doi.org/10.1007/s11784-021-00886-w (2021).

\bibitem{cosen}
M.Cosentino, P.Vetro
\newblock Fixed Point Results for $F$-contractive mappings of Hardy-Rogers-Type
\newblock {\em  Filomat}, 28 (4) 715-722 (2014).


\bibitem{FE}
A.Fernandez-Leon,
\newblock Best proximity points for proximal contractions
\newblock {\em J. Nonlinear Convex Anal.}, 15 (1) 313-324 (2014).

\bibitem{war}
D.Wardowski,
\newblock Fixed points of a new type of contractive mappings in complete metric spaces
\newblock {\em  Fixed Point Theory Appl.}, 94 doi:https://doi.org/10.1186/1687-1812-2012-94 (2012).




%
%




\end{thebibliography}
\end{document}